\theoremstyle{plain}
\newtheorem{theorem}{Theorem}
\newtheorem*{theorem*}{Theorem}
\newtheorem*{corollary*}{Corollary}
\newtheorem{lemma}{Lemma}
\newtheorem*{lemma*}{Lemma}
\newtheorem*{proposition*}{Proposition}
\newtheorem*{conjecture*}{Conjecture}
\theoremstyle{definition}
\newtheorem*{definition*}{Definition}
\theoremstyle{remark}
\newtheorem*{remark*}{Remark}
\begin{document}

\title[The generalized shift operator, Cantor series, and rational numbers]{A note on rational numbers and certain operators (The generalized shifts and rational numbers)}
\author{Symon Serbenyuk}
\address{
  45~Shchukina St. \\
  Vinnytsia \\
  21012 \\
  Ukraine}
\email{simon6@ukr.net}

\subjclass[2010]{11K55 11J72   26A30}

\keywords{generalized shift operator, rational number, positive generalizations of $q$-ary numeral system.}

\begin{abstract} This paper is devoted to conditions defined  in terms of the generalized shift operator for a rational number to be representable by certain positive generalizations of $q$-ary expansions.
\end{abstract}

\maketitle

\maketitle



\section{Introduction}

The problem on conditions for a rational number to be representable by the following positive series was introduced by Georg Cantor in the paper \cite{Cantor1} in 1869: 
\begin{equation}
\label{eq:  series 1}
\frac{\varepsilon_1}{q_1}+\frac{\varepsilon_2}{q_1q_2}+\dots +\frac{\varepsilon_k}{q_1q_2\dots q_k}+\dots ,
\end{equation}
where $Q\equiv (q_k)$ is a fixed sequence of positive integers, $q_k>1$,  and $(\Theta_k)$ is a sequence of the sets $\Theta_k\equiv\{0,1,\dots ,q_k-1\}$,  as well as $\varepsilon_k\in\Theta_k$.

Series of form \eqref{eq:  series 1} are called \emph{Cantor series}. By $\Delta^Q _{\varepsilon_1\varepsilon_2...\varepsilon_k...}$  denote any number $x\in [0,1]$  having expansion \eqref{eq:  series 1}. This notation is called \emph{the representation of  $x\in [0,1]$ by Cantor series \eqref{eq:  series 1}}. 

It is easy to see that  Cantor series expansion  \eqref{eq:  series 1} is the $q$-ary expansion 
$$
\frac{\varepsilon_1}{q}+\frac{\varepsilon_2}{q^2}+\dots+\frac{\varepsilon_k}{q^k}+\dots
$$
of real numbers from~$[0,1]$, where $\varepsilon_k\in\{0,1, \dots , q-1\}$,  whenever the condition $q_k=const=q$ holds for all $k\in\mathbb N$ ($\mathbb N$  is the set of all positive integers),  where $1<q\in\mathbb N$. 

A number of researches are devoted to investigations of Cantor expansions from different points of view (a brief description is given in \cite{Serbenyuk17}) including studying the problem on Cantor series expansions of rational numbers (for example, see \cite{{Cantor1},  {Diananda_Oppenheim1955}, {Hancl_Tijdeman2004}, S13,  {Serbenyuk17}, Serbenyuk2017,  Serbenyuk20, Symon2021,
 Tijdeman_Pingzhi2002}). In~\cite{Galambos1976},    Prof.~J\'anos~Galambos calls the problem on representations of rational numbers by Cantor series \eqref{eq:  series 1} as  \emph{the fourth open problem}. 

One can note that the notion of the shift operator is applicable to this problem (for example, some descritions are given in \cite{{Serbenyuk17}, Symon2021}). This paper is devoted to applications of the notion of the generalized shift operator to solving  the problem on representations of rational numbers by positive Cantor series. The present research is the continuation of investigations presented in the papers~\cite{Serbenyuk20, Symon2021}.

\section{The shift and generalized shift operators}

 \emph{The shift operator $\sigma$ of expansion \eqref{eq:  series 1}} is a map of the following form
$$
\sigma(x)=\sigma\left(\Delta^Q _{\varepsilon_1\varepsilon_2\ldots \varepsilon_k\ldots}\right)=\sum^{\infty} _{k=2}{\frac{\varepsilon_k}{q_2q_3\dots q_k}}=q_1\Delta^{Q} _{0\varepsilon_2\ldots \varepsilon_k\ldots}.
$$
It is easy to see that 
\begin{equation*}
\begin{split}
\sigma^n(x) &=\sigma^n\left(\Delta^Q _{\varepsilon_1\varepsilon_2\ldots \varepsilon_k\ldots}\right)\\
& =\sum^{\infty} _{k=n+1}{\frac{\varepsilon_k}{q_{n+1}q_{n+2}\dots q_k}}=q_1\dots q_n\Delta^{Q} _{\underbrace{0\ldots 0}_{n}\varepsilon_{n+1}\varepsilon_{n+2}\ldots}.
\end{split}
\end{equation*}

One can note the partial case of this operator  for  $q$-ary expansions
$$
\sigma^n\left(\Delta^q _{\varepsilon_1\varepsilon_2\ldots \varepsilon_k\ldots}\right) =\sum^{\infty} _{k=n+1}{\frac{\varepsilon_k}{q^{k-n}}}=\Delta^{q} _{\varepsilon_{n+1}\varepsilon_{n+2}\ldots}.
$$

Suppose a number $x\in [0,1]$ is  represented by  series \eqref{eq: series 1}. Then  \emph{the generalized shift operator $\sigma_m$} is a map of the following form:
$$
\sigma_m(x)=\sigma_m\left(\Delta^Q _{\varepsilon_1\varepsilon_2\ldots \varepsilon_k\ldots}\right)=\sum^{m-1} _{k=1}{\frac{\varepsilon_k}{q_1q_2\cdots q_k}}+\sum^{\infty} _{t=m+1}{\frac{\varepsilon_t}{q_1q_2\cdots q_{m-1}q_{m+1}\cdots q_t}}.
$$
That is,  any number from $[0,1]$ can be represented by two fixed sequences $(q_k)$ and $(\varepsilon_k)$ (Cantor series expansions). The generalized shift operator maps the preimage into a number represented by the following two sequences $(q_1, q_2, \dots , q_{m-1}, q_{m+1}, q_{m+2}, \dots )$ and $(\varepsilon_1, \varepsilon_2, \dots , \varepsilon_{m-1}, \varepsilon_{m+1}, \varepsilon_{m+2}, \dots )$.  Properties of this operator are considered more detail in \cite{{Symon2021-2}}.

Denote by  $\vartheta_{m}$ the sum $\sum^{m} _{k=1}{\frac{\varepsilon_k}{q_1q_2\cdots q_k}}$ and by $\delta_m$ the sum
$$
\varepsilon_1q_2q_3\cdots q_m+\varepsilon_2q_3q_4\cdots q_m+\dots+\varepsilon_{m-1}q_m
+\varepsilon_m.
$$
 Then
\begin{equation}
\label{eq: generalized shift 1}
\sigma_m(x)=q_mx-(q_m-1)\vartheta_{m-1}-\frac{\varepsilon_m}{q_1q_2\cdots q_{m-1}}.
\end{equation}

For the case of positive Cantor series, the notion of the generalized shift operator is considered more detail in~\cite{Symon2021-2} (see also~\cite{Serbenyuk2017}, where the shift and generalized shift operators are considered for the case of alternating Cantor series). 

Let us remark that the following statement is true.

\begin{lemma}
For the generalized shift operator defined in terms of positive Cantor series, the following relationships hold:
\begin{itemize}
\item
$$
\sigma_{m+1}(x)=\frac{q_{m+1}}{q_m}\sigma_m(x)-\frac{q_{m+1}-q_m}{q_m}\vartheta_{m-1}-\frac{\varepsilon_{m+1}-\varepsilon_m}{q_1q_2\cdots q_{m-1}q_m};
$$
\item
$$
\varepsilon_m=q_1q_2\cdots q_mx-q_1q_2\cdots q_{m-1}\sigma_m(x)-(q_m-1)\delta_{m-1}.
$$
\end{itemize}
\end{lemma}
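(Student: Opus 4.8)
The plan is to derive both identities directly from formula~\eqref{eq: generalized shift 1}, which already expresses $\sigma_m(x)$ in terms of $x$, $\vartheta_{m-1}$, and $\varepsilon_m$; no recourse to the defining series is needed, only algebra. The one structural fact I would record at the outset is that $q_1q_2\cdots q_{m-1}\vartheta_{m-1}=\delta_{m-1}$, which is immediate from the definitions: multiplying $\sum_{k=1}^{m-1}\varepsilon_k/(q_1\cdots q_k)$ by $q_1q_2\cdots q_{m-1}$ and expanding term by term produces exactly $\varepsilon_1q_2\cdots q_{m-1}+\dots+\varepsilon_{m-2}q_{m-1}+\varepsilon_{m-1}$.

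For the second relation I would start from~\eqref{eq: generalized shift 1} and multiply through by $q_1q_2\cdots q_{m-1}$. The term $(q_m-1)\vartheta_{m-1}$ becomes $(q_m-1)\delta_{m-1}$ by the observation above, the last term $\varepsilon_m/(q_1\cdots q_{m-1})$ becomes $\varepsilon_m$, and isolating $\varepsilon_m$ gives $\varepsilon_m=q_1q_2\cdots q_mx-q_1q_2\cdots q_{m-1}\sigma_m(x)-(q_m-1)\delta_{m-1}$, as claimed.

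For the first relation I would solve~\eqref{eq: generalized shift 1} for $x$, obtaining $x=\frac{1}{q_m}\bigl(\sigma_m(x)+(q_m-1)\vartheta_{m-1}+\varepsilon_m/(q_1\cdots q_{m-1})\bigr)$, and substitute this into the instance of~\eqref{eq: generalized shift 1} with $m$ replaced by $m+1$, namely $\sigma_{m+1}(x)=q_{m+1}x-(q_{m+1}-1)\vartheta_m-\varepsilon_{m+1}/(q_1\cdots q_m)$. Using $\vartheta_m=\vartheta_{m-1}+\varepsilon_m/(q_1\cdots q_m)$ to eliminate $\vartheta_m$, the whole expression is rewritten in terms of $\sigma_m(x)$, $\vartheta_{m-1}$, $\varepsilon_m$, and $\varepsilon_{m+1}$, and it remains to collect coefficients. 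The coefficient of $\vartheta_{m-1}$, put over the common denominator $q_m$, simplifies to $\frac{q_{m+1}(q_m-1)}{q_m}-(q_{m+1}-1)=-\frac{q_{m+1}-q_m}{q_m}$; the two contributions involving $\varepsilon_m$ and the one involving $\varepsilon_{m+1}$ all carry denominator $q_1\cdots q_{m-1}q_m$ (using $q_1\cdots q_m=q_m\,q_1\cdots q_{m-1}$) and combine into $-\frac{\varepsilon_{m+1}-\varepsilon_m}{q_1q_2\cdots q_{m-1}q_m}$, which is exactly the asserted formula.

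The only real work is the final coefficient bookkeeping in the first identity, and it is entirely routine; there is no genuine obstacle. One could alternatively verify both identities straight from the series definitions of $\sigma_m(x)$ and $\sigma_{m+1}(x)$, but routing everything through~\eqref{eq: generalized shift 1} keeps the computation short and transparent.
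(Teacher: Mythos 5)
Your proposal is correct and follows essentially the same route as the paper: both identities are derived purely algebraically from \eqref{eq: generalized shift 1}, the first by equating the two expressions for $x$ at levels $m$ and $m+1$ together with $\vartheta_m=\vartheta_{m-1}+\varepsilon_m/(q_1\cdots q_m)$, the second by multiplying through by $q_1q_2\cdots q_{m-1}$ and using $q_1q_2\cdots q_{m-1}\vartheta_{m-1}=\delta_{m-1}$ (which the paper leaves implicit and you rightly make explicit). Your coefficient computations check out.
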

\begin{proof}
Let us prove \emph{the first relationship}. Using \eqref{eq: generalized shift 1}, we get
$$
\frac{\sigma_m(x)}{q_m}+\frac{q_m-1}{q_m}\vartheta_{m-1}+\frac{\varepsilon_m}{q_1q_2\cdots q_{m-1}q_m}=x=\frac{\sigma_{m+1}(x)}{q_{m+1}}+\frac{q_{m+1}-1}{q_{m+1}}\vartheta_{m}+\frac{\varepsilon_{m+1}}{q_1q_2\cdots q_mq_{m+1}},
$$ 
$$
{q_{m+1}\sigma_m(x)}+{q_{m+1}(q_m-1)}\vartheta_{m-1}+\frac{q_{m+1}\varepsilon_m}{q_1q_2\cdots q_{m-1}}={q_m\sigma_{m+1}(x)}+q_m(q_{m+1}-1)\vartheta_{m}+\frac{\varepsilon_{m+1}}{q_1q_2\cdots q_{m-1}}.
$$ 

Since $\vartheta_m=\vartheta_{m-1}+\frac{\varepsilon_m}{q_1q_2\cdots q_m}$, we obtain
$$
q_{m+1}\sigma_m(x)=q_{m+1}\vartheta_{m-1}+q_m\sigma_{m+1}(x)-q_m\vartheta_{m-1}-\frac{\varepsilon_m}{q_1q_2\cdots q_{m-1}}+\frac{\varepsilon_{m+1}}{q_1q_2\cdots q_{m-1}}.
$$
Hence
$$
\sigma_{m+1}(x)=\frac{q_{m+1}}{q_m}\sigma_m(x)-\frac{q_{m+1}-q_m}{q_m}\vartheta_{m-1}-\frac{\varepsilon_{m+1}-\varepsilon_m}{q_1q_2\cdots q_{m-1}q_m}.
$$

Let us prove \emph{the second relationship}. Using \eqref{eq: generalized shift 1}, we have
$$
q_1q_2\cdots q_{m-1}\sigma_m(x)=q_1q_2\cdots q_mx-q_1q_2\cdots q_{m-1}(q_m-1)\vartheta_{m-1}-\varepsilon_m.
$$
The relationship follows from the last-mentioned equality.
\end{proof}

\section{Rational numbers}

\begin{theorem}
A number $x\in [0,1]$ represented by series \eqref{eq: series 1} is a  rational number  if and only if  there exist non-negative integers $m_1$ and $m_2$ such that $m_1\ne m_2$ and the condition
$$
\left\{q_1q_2\cdots q_{m_1-1}\sigma_{m_1}(x)\right\}=\left\{q_1q_2\cdots q_{m_2-1}\sigma_{m_2}(x)\right\}
$$
holds, where $\{a\}$ is the fractional part of $a$, $q_{-1}=q_0=1$, and $\sigma_0(x)=x$.
\end{theorem}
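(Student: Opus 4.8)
The plan is to reduce the condition in the statement to an elementary one about the fractional parts of the numbers $P_mx$, where $P_m:=q_1q_2\cdots q_m$ for $m\ge1$ and $P_0:=1$, and then to finish with a pigeonhole argument.

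\emph{Step 1 (reduction via the Lemma).} Rewriting the second relationship of the Lemma gives, for every $m\ge1$,
$$q_1q_2\cdots q_{m-1}\,\sigma_m(x)=P_mx-\big((q_m-1)\delta_{m-1}+\varepsilon_m\big).$$
Here $(q_m-1)\delta_{m-1}+\varepsilon_m$ is a non-negative integer, since $q_m-1\ge1$, $\delta_{m-1}\ge0$ and $\varepsilon_m\ge0$ are all integers. Hence $q_1q_2\cdots q_{m-1}\sigma_m(x)$ and $P_mx$ differ by an integer, so
$$\left\{q_1q_2\cdots q_{m-1}\sigma_m(x)\right\}=\{P_mx\}\qquad(m\ge1),$$
and for $m=0$ both sides equal $\{x\}$ because $\sigma_0(x)=x$ and $P_0=1$. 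Therefore the condition in the theorem is equivalent to the following: there exist non-negative integers $m_1\ne m_2$ with $\{P_{m_1}x\}=\{P_{m_2}x\}$.

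\emph{Step 2 (the two implications).} For sufficiency, assume $\{P_{m_1}x\}=\{P_{m_2}x\}$ with $m_1<m_2$. Then $(P_{m_2}-P_{m_1})x=P_{m_2}x-P_{m_1}x\in\mathbb{Z}$; since every $q_k>1$, the sequence $(P_m)$ is strictly increasing, so $P_{m_2}-P_{m_1}$ is a positive integer and $x\in\mathbb{Q}$. For necessity, write $x=a/b$ with $a\in\mathbb{Z}$, $b\in\mathbb{N}$; then $\{P_mx\}=(P_ma\bmod b)/b$ depends only on $P_m\bmod b$. As $m$ ranges over $\mathbb{N}$ this residue lies in the finite set $\{0,1,\dots,b-1\}$, so by the pigeonhole principle there are $m_1<m_2$ with $P_{m_1}\equiv P_{m_2}\pmod b$, and then $\{P_{m_1}x\}=\{P_{m_2}x\}$. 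Combining Steps 1 and 2 proves the theorem.

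I do not anticipate a genuine difficulty: the content of the argument is entirely in Step 1, namely the observation that the Lemma collapses $\{q_1\cdots q_{m-1}\sigma_m(x)\}$ to $\{P_mx\}$; once that is in hand, both directions are standard. The only point that calls for a little care is the degenerate boundary behaviour (for instance $x=1$, or tails with $\varepsilon_k=q_k-1$ for all large $k$), but such $x$ are already rational and the displayed condition still holds for them when $m_1,m_2\ge1$, so nothing has to be excluded.
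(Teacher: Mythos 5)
Your proof is correct and follows essentially the same route as the paper: both rest on the identity $q_1\cdots q_{m-1}\sigma_m(x)=q_1\cdots q_m\,x-\bigl((q_m-1)\delta_{m-1}+\varepsilon_m\bigr)$, use the pigeonhole principle on residues modulo $b$ for necessity, and extract rationality from $(P_{m_2}-P_{m_1})x\in\mathbb{Z}$ for sufficiency. Your Step 1 is a slightly cleaner packaging: by observing only that the two quantities differ by an integer (so their fractional parts coincide outright), you avoid the paper's case analysis on whether $\{\sigma^k(x)\}$ equals $\sigma^k(x)$ or $\sigma^k(x)-1$ for the degenerate tail representations, and the corresponding split in the sufficiency argument.
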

\begin{proof} Let us prove that  \emph{the necessity} is true.  Let $x$ be a rational number, i.e., $x=\frac a b$, where $a\in\mathbb Z_0=\mathbb N\cup \{0\}$ and $b\in\mathbb N$, $a<b$, and $(a,b)=1$.

Let us consider the  sequence $(q_1q_2\cdots q_{k-1}\sigma_{k}(x))$. Using \eqref{eq: generalized shift 1}, we have
$$
q_1q_2\cdots q_{k-1}\sigma_{k}(x)=q_1q_2\cdots q_{k-1}q_kx-\varepsilon_{k}-(q_k-1)(\varepsilon_1q_2q_3\cdots q_{k-1}+\dots + \varepsilon_{k-2}q_{k-1}+\varepsilon_{k-1})
$$
$$
=q_1q_2\cdots q_{k}\frac a b+(\varepsilon_1q_2q_3\cdots q_{k-1}+\dots + \varepsilon_{k-2}q_{k-1}+\varepsilon_{k-1})-(\varepsilon_1q_2q_3\cdots q_{k}+\dots + \varepsilon_{k-1}q_{k}+\varepsilon_{k})
$$
$$
=\frac{aq_1q_2\cdots q_k+b\delta_{k-1}-b\delta_k}{b}.
$$

Since in our case 
$$
\sigma^k(x)=q_1q_2\cdots q_kx-(\varepsilon_1q_2q_3\cdots q_{k}+\dots + \varepsilon_{k-1}q_{k}+\varepsilon_{k})=\frac{q_1q_2\cdots q_ka-b\delta_k}{b},
$$
we obtain
$$
q_1q_2\cdots q_{k-1}\sigma_{k}(x)=\delta_{k-1}+\sigma^k(x)=\delta_{k-1}+\frac{q_1q_2\cdots q_ka-b\delta_k}{b}.
$$

It is easy to see that 
$$
\{\sigma^k(x)\}=\begin{cases}
\sigma^k(x)-1&\text{whenever $x=\Delta^Q _{\varepsilon_1\varepsilon_2...\varepsilon_{k-1}[\varepsilon_k-1][q_{k+1}-1][q_{k+2}-1]...}$}\\
\sigma^k(x)&\text{whenever $x\ne\Delta^Q _{\varepsilon_1\varepsilon_2...\varepsilon_{k-1}[\varepsilon_k-1][q_{k+1}-1][q_{k+2}-1]...}$.}
\end{cases}
$$
Hence if $x\ne\Delta^Q _{\varepsilon_1\varepsilon_2...\varepsilon_{k-1}[\varepsilon_k-1][q_{k+1}-1][q_{k+2}-1]...}$, then
$$
\left[q_1q_2\cdots q_{k-1}\sigma_{k}(x)\right]=\delta_{k-1}=\varepsilon_1q_2q_3\cdots q_{k-1}+\varepsilon_2q_3q_4\cdots q_{k-1}+\dots +\varepsilon_{k-2}q_{k-1}+\varepsilon_{k-1}
$$
and
$$
\left\{q_1q_2\cdots q_{k-1}\sigma_{k}(x)\right\}=\sigma^k(x).
$$
If $x=\Delta^Q _{\varepsilon_1\varepsilon_2...\varepsilon_{k-1}[\varepsilon_k-1][q_{k+1}-1][q_{k+2}-1]...}$, then 
$$
\left[q_1q_2\cdots q_{k-1}\sigma_{k}(x)\right]=1+\delta_{k-1}=1+\varepsilon_1q_2q_3\cdots q_{k-1}+\varepsilon_2q_3q_4\cdots q_{k-1}+\dots +\varepsilon_{k-2}q_{k-1}+\varepsilon_{k-1}
$$
and
$$
\left\{q_1q_2\cdots q_{k-1}\sigma_{k}(x)\right\}=0.
$$
Here $[x]$ is the integer part of $x$ and $\{x\}$ is the fractional part of $x$.

By analogy to arguments described in~\cite{Symon2021}, we get
$$
\left\{q_1q_2\cdots q_{k-1}\sigma_{k}(x)\right\}=\left\{\frac{q_1q_2\cdots q_ka-\delta_kb}{b}\right\}=\left\{\frac{a_k}{b}\right\},
$$
where $b$ is a fixed positive integer, $a_k\in\{0,1,\dots, b-1, b\}$, and there exist  non-negative integers $m_1$ and $m_2$ such that $m_1\ne m_2$ and $a_{m_1}=a_{m_2}$ as $k \to \infty$.

Let us prove \emph{the sufficiency}. Suppose there exist non-negative integers $m_1$ and $m_2$ such that $m_1<m_2$ and 
$$
\left\{q_1q_2\cdots q_{m_1-1}\sigma_{m_1}(x)\right\}=\left\{q_1q_2\cdots q_{m_2-1}\sigma_{m_2}(x)\right\}.
$$

Let us prove the case when 
$$
x\notin \left\{\Delta^Q _{\varepsilon_1\varepsilon_2...\varepsilon_{m_1-1}[\varepsilon_{m_1}-1][q_{m_1+1}-1][q_{m_1+2}-1]...}, \Delta^Q _{\varepsilon_1\varepsilon_2...\varepsilon_{m_2-1}[\varepsilon_{m_2}-1][q_{m_2+1}-1][q_{m_2+2}-1]...}\right\}.
$$
Since $\{x\}=x-[x]$, we have
$$
\left\{q_1q_2\cdots q_{m_1-1}\sigma_{m_1}(x)\right\}=q_1q_2\cdots q_{m_1-1}\sigma_{m_1}(x)-\delta_{m_1-1},
$$
$$
\left\{q_1q_2\cdots q_{m_2-1}\sigma_{m_2}(x)\right\}=q_1q_2\cdots q_{m_1-1}\sigma_{m_1}(x)-\delta_{m_2-1},
$$
and
$$
q_1q_2\cdots q_{m_1-1}\sigma_{m_1}(x)-\delta_{m_1-1}=q_1q_2\cdots q_{m_1-1}\sigma_{m_1}(x)-\delta_{m_2-1}.
$$
Using~\eqref{eq: generalized shift 1}, we obtain
$$
q_1\cdots q_{m_1-1}q_{m_1}x-(q_{m_1}-1)\delta_{m_1-1}-\varepsilon_{m_1}-\delta_{m_1-1}=q_1\cdots q_{m_2-1}q_{m_2}x-(q_{m_2}-1)\delta_{m_2-1}-\varepsilon_{m_2}-\delta_{m_2-1}.
$$
Hence 
$$
x=\frac{q_{m_1}\delta_{m_1-1}-q_{m_2}\delta_{m_2-1}+\varepsilon_{m_1}-\varepsilon_{m_2}}{q_1q_2\cdots q_{m_1} -q_1q_2\cdots q_{m_2}}
$$
is a rational number.

The proof is analogues for the case when 
$$
x\in \left\{\Delta^Q _{\varepsilon_1\varepsilon_2...\varepsilon_{m_1-1}[\varepsilon_{m_1}-1][q_{m_1+1}-1][q_{m_1+2}-1]...}, \Delta^Q _{\varepsilon_1\varepsilon_2...\varepsilon_{m_2-1}[\varepsilon_{m_2}-1][q_{m_2+1}-1][q_{m_2+2}-1]...}\right\}.
$$
\end{proof}

One can note that certain numbers from $[0,1]$ have two different representations by Cantor series \eqref{eq:  series 1}, i.e., 
$$
\Delta^Q _{\varepsilon_1\varepsilon_2\ldots\varepsilon_{m-1}\varepsilon_m000\ldots}=\Delta^Q _{\varepsilon_1\varepsilon_2\ldots\varepsilon_{m-1}[\varepsilon_m-1][q_{m+1}-1][q_{m+2}-1]\ldots}=\sum^{m} _{i=1}{\frac{\varepsilon_i}{q_1q_2\dots q_i}}.
$$
Such numbers are called \emph{$Q$-rational}. The other numbers in $[0,1]$ are called \emph{$Q$-irrational}.

Let $c_1,c_2, \dots , c_m$ be an ordered tuple of integers such that $c_i\in \{0,1, \dots , q_i-1\}$ for all $i=\overline{1,m}$. Then
\emph{a cylinder $\Lambda^{Q} _{c_1c_2...c_m}$ of rank $m$ with base $c_1c_2\ldots c_m$} is a set of the form
$$
\Lambda^{Q} _{c_1c_2...c_m}\equiv\{x: x=\Delta^{Q} _{c_1c_2...c_m\varepsilon_{m+1}\varepsilon_{m+2}\ldots\varepsilon_{m+k}\ldots}\}.
$$

\begin{theorem}  Suppose a number $x$ represented by series~\eqref{eq: series 1} and $x\ne \Delta^Q _{\varepsilon_1...\varepsilon_{m-1}[\varepsilon_m-1][q_{m+1}-1][q_{m+2}-1]...}$ for any $m\in\mathbb N$. Then $x$ is a rational number $\frac a b$ (here $a,b\in\mathbb N$, $a<b$, and $(a,b)=1$) if and only if the condition
$$
\varepsilon_{m+1}=\left[\frac{(q_{m+1}+1)q_1q_2\cdots q_ma-b(q_1q_2\cdots q_m\sigma_{m+1}(x)+q_{m+1}\delta_m)}{b}\right]
$$
holds for any $m\in\mathbb N$, where $\varepsilon_1=\left[\frac{a}{b}q_1\right]$, $[x]$ is the integer part of $x$, and
$$
\delta_m=\varepsilon_1q_2q_3\cdots q_m+\varepsilon_2q_3q_4\cdots q_m+\cdots +\varepsilon_{m-1}q_m+\varepsilon_m.
$$
\end{theorem}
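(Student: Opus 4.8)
The plan is to deduce everything from the second relationship of the Lemma with $m$ replaced by $m+1$,
$$
\varepsilon_{m+1}=q_1q_2\cdots q_{m+1}x-q_1q_2\cdots q_m\sigma_{m+1}(x)-(q_{m+1}-1)\delta_m,
$$
together with the identity $\sigma^m(x)=q_1q_2\cdots q_mx-\delta_m$, which comes from $\sigma^m(x)=q_1q_2\cdots q_m(x-\vartheta_m)$ and the telescoping identity $q_1q_2\cdots q_m\vartheta_m=\delta_m$ exactly as in the proof of the previous theorem. The first step is purely algebraic and is carried out \emph{without} assuming $x$ rational: putting $y:=\frac ab$, using $(q_{m+1}+1)q_1q_2\cdots q_m=q_1q_2\cdots q_{m+1}+q_1q_2\cdots q_m$, and eliminating $\sigma_{m+1}(x)$ and $\delta_m$ by means of the two facts above, one should arrive at
$$
\frac{(q_{m+1}+1)q_1q_2\cdots q_ma-b\left(q_1q_2\cdots q_m\sigma_{m+1}(x)+q_{m+1}\delta_m\right)}{b}=\varepsilon_{m+1}+(q_{m+1}+1)q_1q_2\cdots q_m(y-x)+\sigma^m(x).
$$

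For the necessity I would set $y=x$, which collapses the right-hand side to $\varepsilon_{m+1}+\sigma^m(x)$. Reading the hypothesis on $x$ as the statement that no shifted tail $\sigma^m(x)$ equals $1$ (equivalently, that $x$ is $Q$-irrational), one has $0\le\sigma^m(x)<1$, and since $\varepsilon_{m+1}$ is a non-negative integer the integer part of $\varepsilon_{m+1}+\sigma^m(x)$ is precisely $\varepsilon_{m+1}$; this is the asserted formula for $m\ge1$. The normalization $\varepsilon_1=\left[\frac{a}{b}q_1\right]$ is just the standard first-digit identity $\varepsilon_1=[q_1x]$, which holds because, by \eqref{eq: generalized shift 1}, $q_1x-\varepsilon_1=\sigma_1(x)=\sigma^1(x)\in[0,1)$.

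For the sufficiency one keeps $y$ and $x$ separate. Since, by hypothesis, $\varepsilon_{m+1}$ is the integer part of the right-hand side of the display above, subtracting $\varepsilon_{m+1}$ gives $0\le(q_{m+1}+1)q_1q_2\cdots q_m(y-x)+\sigma^m(x)<1$ for every $m\in\mathbb N$; since $0\le\sigma^m(x)\le1$ always (each digit satisfies $\varepsilon_k\le q_k-1$), this forces
$$
\left|(q_{m+1}+1)q_1q_2\cdots q_m(y-x)\right|\le 1\qquad\text{for every }m.
$$
Because $q_k\ge2$ for all $k$, the coefficient $(q_{m+1}+1)q_1q_2\cdots q_m\ge 3\cdot2^m$ grows without bound, so the inequality can hold for all $m$ only when $y=x$; hence $x=\frac ab$ is rational.

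The only point I expect to require care is the treatment of the $Q$-rational numbers: the necessity argument uses $\sigma^m(x)<1$ for \emph{every} $m$, so the hypothesis must be understood to exclude, in particular, the representation of a $Q$-rational number whose digits are eventually $q_{m+1}-1,q_{m+2}-1,\dots$ — for such a representation $\sigma^m(x)=1$ and the integer part overshoots $\varepsilon_{m+1}$ by one, so the formula fails. Everything else — the two substitutions, the telescoping identities $q_1q_2\cdots q_m\vartheta_m=\delta_m$ and $\sigma^m(x)=q_1q_2\cdots q_mx-\delta_m$, and the growth estimate in the converse — is routine bookkeeping.
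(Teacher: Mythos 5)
Your proof is correct, and the two halves compare differently with the paper's. The necessity half is essentially the paper's argument in a cleaner form: both boil down to the identity $z_{m+1}=\varepsilon_{m+1}+\sigma^m(x)$ when $a/b=x$, combined with the cylinder inclusion $0\le\sigma^m(x)<1$, which is exactly where the hypothesis excluding the tails $[q_{m+1}-1][q_{m+2}-1]\ldots$ is used (your closing caveat about those representations is precisely the role this hypothesis plays in the paper as well). The sufficiency half is where you genuinely depart from the paper, and to your advantage. The paper substitutes $\varepsilon_{m+1}=[z_{m+1}]$ back into $x=\vartheta_{m+1}+\sigma^{m+1}(x)/(q_1q_2\cdots q_{m+1})$ and concludes $x=a/b$ ``because $\{z_{m+1}\}=\{\varepsilon_{m+1}+\sigma^m(x)\}$''; but by your own displayed identity that fractional-part equality amounts to $(q_{m+1}+1)q_1q_2\cdots q_m\left(\tfrac ab-x\right)\in\mathbb Z$, which is not known unless one already assumes $x=\tfrac ab$ --- so the paper's converse is circular as written. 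Your version keeps $y=\tfrac ab$ and $x$ distinct, reads the hypothesis as $0\le(q_{m+1}+1)q_1q_2\cdots q_m(y-x)+\sigma^m(x)<1$, and eliminates $y-x$ by letting the coefficient $(q_{m+1}+1)q_1q_2\cdots q_m\ge 3\cdot 2^m$ grow; this needs only the crude bound $0\le\sigma^m(x)\le 1$, which holds for every representation. That is a complete, non-circular argument and is the preferable way to prove the sufficiency.
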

\begin{proof} \emph{Necessity.} 
Let $x$ be a rational number $\frac{a}{b}$. Then for any $m\in\mathbb N$ there exists a cylinder $\Lambda^Q _{\varepsilon_1\varepsilon_2...\varepsilon_m}$ such that $x\in \Lambda^Q _{\varepsilon_1\varepsilon_2...\varepsilon_m}$.
That is,
$$
x\in\left[\frac{\delta_m}{q_1q_2\cdots q_m},\frac{\delta_m+1}{q_1q_2\cdots q_m}\right].
$$

Since $\Delta^Q _{\varepsilon_1...\varepsilon_{m-1}\varepsilon_m[q_{m+1}-1][q_{m+2}-1]...}=\Delta^Q _{\varepsilon_1\varepsilon_2...\varepsilon_{m-1}[\varepsilon_{m}+1]000...}$, where $\varepsilon_m \ne q_m-1$, we do not use representations of the form $\Delta^Q _{\varepsilon_1\varepsilon_2...\varepsilon_m[\varepsilon_m-1][q_{m+1}-1][q_{m+2}-1]...}$ and assume that
$$
\frac{\delta_m}{q_1q_2\cdots q_m}\le x<\frac{\delta_m+1}{q_1q_2\cdots q_m},
$$
$$
{\delta_m}\le q_1q_2\cdots q_m\frac{a}{b}<{\delta_m+1}.
$$

Since 
$$
\sigma_{m+1}(x)=\sum^m _{k=1}{\frac{\varepsilon_k}{q_1q_2\cdots q_k}}+\frac{\sigma^{m+1}(x)}{q_1q_2\cdots q_m},
$$
$\sigma^{m+1}(x)=q_1\cdots q_mq_{m+1}x-\delta_{m+1}$, and $\delta_{m+1}=\varepsilon_{m+1}+q_{m+1}\delta_m$, we have
$$
0\le \frac{a}{b}q_1q_2\cdots q_m -q_1q_2\cdots q_m\sigma_{m+1}(x)+\frac{a}{b}q_1q_2\cdots q_{m+1}-q_{m+1}\delta_m-\varepsilon_{m+1}<1,
$$
$$
\varepsilon_{m+1}\le \frac{a}{b}q_1q_2\cdots q_m(q_{m+1}+1) -q_1q_2\cdots q_m\sigma_{m+1}(x)-q_{m+1}\delta_m<\varepsilon_{m+1}+1.
$$
Hence
$$
\varepsilon_{m+1}=\left[\frac{(q_{m+1}+1)q_1q_2\cdots q_ma-b(q_1q_2\cdots q_m\sigma_{m+1}(x)+q_{m+1}\delta_m)}{b}\right]=[z_{m+1}],
$$
where $\varepsilon_1=\left[\frac{a}{b}q_1\right]$.

\emph{Sufficiency}. If $\varepsilon_{m+1}=[z_{m+1}]$, then
$$
x=\vartheta_{m+1}+\frac{\sigma^{m+1}(x)}{q_1q_2\cdots q_{m+1}}=\frac{\delta_{m+1}}{q_1q_2\cdots q_{m+1}}+\frac{\sigma^{m+1}(x)}{q_1q_2\cdots q_{m+1}}
$$
$$
=\frac{\varepsilon_{m+1}+q_{m+1}\delta_m}{q_1q_2\cdots q_{m+1}}+\frac{\sigma^{m+1}(x)}{q_1q_2\cdots q_{m+1}}=\frac{[z_{m+1}]+q_{m+1}\delta_m}{q_1q_2\cdots q_{m+1}}+\frac{\sigma^{m+1}(x)}{q_1q_2\cdots q_{m+1}}
$$
$$
=\frac{z_{m+1}-\{z_{m+1}\}}{q_1q_2\cdots q_{m+1}}+\frac{q_{m+1}\delta_m}{q_1q_2\cdots q_{m+1}}+\frac{\sigma^{m+1}(x)}{q_1q_2\cdots q_{m+1}}
$$
$$
=\frac{a}{b}{q_1q_2\cdots q_{m+1}}+\sigma^m(x)-\{z_{m+1}\}=\frac{a}{b}
$$
because $\{z_{m+1}\}=\{\varepsilon_{m+1}+\sigma^m(x)\}$, $q_1q_2\cdots q_m\sigma_{m+1}(x)=\delta_m+\sigma^{m+1}(x)$,
and also 
$q_1q_1\cdots q_mx-\delta_m=\sigma^m(x).$
\end{proof}

\end{document}